\newcommand{\comments}[1]{}
\newtheorem{observation}{Obervation}
\newcommand{\WFS}{WFS\xspace}
\newcommand{\fT}{\mathcal T}
\renewcommand{\paragraph}[1]{{\vskip2mm\noindent \bf #1}\hspace{0.4cm}}
\definecolor{purple}{RGB}{153,50,204}
\newcommand*\GetListMember[2]{\StrBetween[#2,\number\numexpr#2+1]{,#1,},,\par}%
\newlength{\MidRadius}
\newcommand*{\CircularSequence}[3]{%
	\StrCount{#3}{,}[\NumberOfElements]
	\pgfmathsetmacro{\AngleSep}{360/(\NumberOfElements+1)}
	\pgfmathsetlength{\MidRadius}{(#1+#2)/2}
	\draw [black,  ultra thick] circle (#2);
	\draw [black, ultra thick] circle (#1);
	\foreach [count = \Count] \Angle in {0,\AngleSep,..., 360} {%
		\draw [black, ultra thick] (\Angle:#2) -- (\Angle:#1);
		\pgfmathsetmacro{\MidPoint}{\Angle+\AngleSep/2}
		\node at (\MidPoint:\MidRadius) {\GetListMember{#3}{\Count}};
	}%
}%
\begin{document}

\title{An iterative exact algorithm for the weighted fair sequences problem}

\author[1]{Markus Sinnl\thanks{markus.sinnl@jku.at}}

\affil[1]{Institute of Production and Logistics Management/JKU Business School, Johannes Kepler University Linz, Linz, Austria}

\date{}

\maketitle

\begin{abstract}
In this work, we present a new iterative exact solution algorithm for the weighted fair sequences problem, which is a recently introduced NP-hard sequencing problem with applications in diverse areas such as TV advertisement scheduling, periodic machine maintenance and real-time scheduling. In the problem we are given an upper bound on the allowed solution sequence length and a list of symbols. For each symbols, there is a positive weight and a number, which gives the minimum times the symbol has to occur in a feasible solution sequence. The goal is to find a feasible sequence, which minimizes the maximum weight-distance product, which is calculated for each consecutive appearance of each symbol in the sequence, including the last and first appearance in the sequence, i.e., the sequence is considered to be circular for the calculation of the objective function.

Our proposed solution algorithm is based on a new mixed-integer programming model for the problem for a fixed sequence length. The model is enhanced with valid inequalities and variable fixings. We also develop an extended model, which allows the definition of an additional set of valid inequalities and present additional results which can allow us to skip the solution of the mixed-integer program for some sequence lengths.

We conduct a computational study on the instances from literature to assess the efficiency of our newly proposed solution approach. Our approach manages to solve 404 of 440 instances to optimality within the given timelimit, most of them within five minutes. The previous best existing solution approach for the problem only managed to solve 229 of these instances, and its exactness depends on an unproven conjecture. Moreover, our approach is up to two magnitudes faster compared to this best existing solution approach.
\end{abstract}

\section{Introduction and motivation}

Scheduling and sequencing problem are amongst the most important and well-studied problems in Operations Research. In such problems, we are typically given a set of tasks, which use up resources, and one or more machines, and we have to allocate the tasks to the machine(s), while fulfilling constraints on the allowed allocations (e.g., starting times, conflicts or dependencies between tasks, \ldots) and considering some objective function (e.g., minimize the finishing time of the last task, \ldots). For a general introduction to scheduling and sequencing, we refer to the book \citep{Brucker2004} and the recent surveys \citep{allahverdi2008survey,levner2010complexity,jimenez2013survey}. In this paper, we study the recently introduced \emph{weighted fair sequences problem (\WFS)} \citep{pessoa2018weighted}, which is defined as follows. 

Let $A=\{a_1,a_2,\ldots a_n\}$ be a set of $n$ symbols with a weight function $w:A\rightarrow \mathbb Z^+$. Let $T$ be the maximum length of the solution sequence and let $f_i$, $a_i \in A$ be minimum number of times symbol $a_i$ must occur in the solution sequence. A feasible solution to the \WFS consists of a sequence $S=s_1,s_2,\ldots, s_{T'}$ with $T'\leq T$ and $s_i \in A$, such that each symbol $a_i$ occurs at least $f_i$ times in $S$. The objective function value is based on a circular distance function. Let $a_i(S)=(p^1_{a_i},p^2_{a_i},\ldots,p^k_{a_i})$ be the ordered list of positions in sequence $S$, where symbol $a_i$ occurs. For each consecutive pair of entries $p^\ell_{a_i},p^{\ell+1}_{a_i} $, the distance $dist(p^\ell_{a_i},p^{\ell+1}_{a_i})=p^{\ell+1}_{a_i}-p^\ell_{a_i}$ is calculated, moreover, the distance $dist(p^k_{a_i},p^{1}_{a_i})=T'+p^{1}_{a_i}-p^k_{a_i}$ is also calculated. Note that this last distance-calculation can also be seen as calculation between consecutive pairs in $a_i(S)$ by considering the sequence $S$ to be circular. Let $D_i$ be the maximum of these distances for $a_i \in A$. The goal is to find a sequence $S^*$ which minimizes $w(a_i)D_i$, $a_i \in A$ (i.e., the weight-distance-product).  


	

Figure \ref{fig:instance} shows two solutions of an instance of the \WFS. The solution sequences are displayed in a circular fashion to give an easy illustration of the circular objective function. The figure shows how placing a symbol $a_i$ in a sequence in a larger number than required by $f_i$ can improve the objective function value.
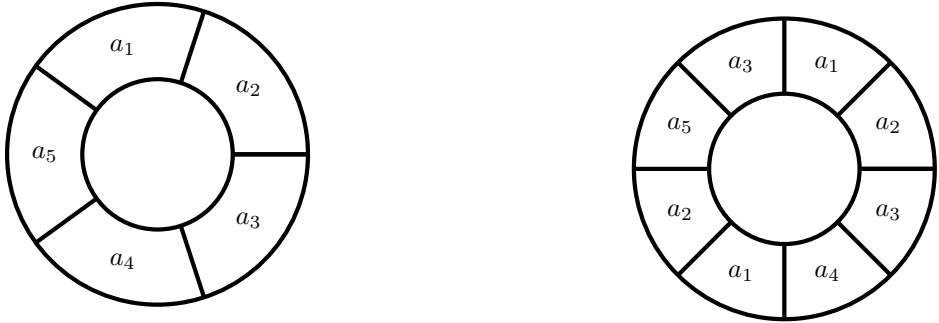
\begin{figure}
\begin{subfigure}{.5\linewidth}
	\centering
\begin{tikzpicture}
\CircularSequence{2.0cm}{1.cm}{$a_2$,$a_1$,$a_5$,$a_4$,$a_3$}
\end{tikzpicture}
\caption{Solution with objective value $50$ (due to $w(a_1)D_1$ and $w(a_2)D_2$)}
\end{subfigure}
\begin{subfigure}{.5\linewidth}
	\centering
	\begin{tikzpicture}
	\CircularSequence{2.0cm}{1.cm}{$a_2$,$a_1$,$a_3$,$a_5$,$a_2$,$a_1$,$a_4$,$a_3$}
	\end{tikzpicture}
	\caption{Optimal solution, objective value $48$ (due to $w(a_4)D_4$)}
\end{subfigure}

\caption{Two solutions for an instance of the \WFS with $A=\{a_1,a_2,a_3,a_4,a_5\}$, $T=10$, $f_i=1, a_i \in A$ and $w(a_1)=10$, $w(a_2)=10$, $w(a_3)=7$, $w(a_4)=6$, $w(a_5)=3$.  \label{fig:instance}}

\end{figure}

The \WFS is NP-hard \citep{pessoa2018weighted} and falls within the class of scheduling problems, which are concerned with the proportional sharing of resources over time. In these so-called \emph{fair sequencing problems} \citep{kubiak2004fair}, the same tasks are executed successively, and the goal is to obtain a schedule fulfilling certain \emph{fairness criteria}, e.g., the (temporal) distance of successive executions of the same tasks should be similar for all type of considered tasks. They play an important role in Just-In-Time manufacturing systems such as the ones used by Toyota, who were among the first to realize the potential of fair sequences in a production context, as it helped to "smooth out" the usage rate of all parts of their assembly line \citep{kubiak2004fair,monden2011toyota}. Moreover, the concept of \emph{fair sequences} also has interesting applications in the area of political representation \citep{tijdeman1980chairman} and resource allocation in computer operating systems \citep{waldspurger1995stride}. Additional applications of the \WFS discussed in \citep{pessoa2018weighted} include TV advertising \citep{bar2003windows,garcia2015scheduling} and periodic machine maintenance \citep{anily1998scheduling,bar2002minimizing}.  



\subsection{Contribution and outline}

In this paper, we introduce a new iterative exact algorithm for the \WFS. The algorithm is based on a new mixed-integer programming (MIP) model for the \WFS\ for sequences which have exactly length $\fT$.
In Section \ref{sec:algorithm} we describe our solution algorithm, including the MIP model, and also present valid inequalities and variable fixing procedures for the MIP model. An extended MIP model using additional variables is also developed and additional algorithmic enhancements are discussed. In Section \ref{sec:results} we present a computational study using the instances from \cite{pessoa2018weighted}. The results show that our algorithm outperforms the algorithm from \cite{pessoa2018weighted} by up two orders of magnitude. We are able to solve 404 of 440 instances from literature to proven optimality, most of them within fives minutes. Section \ref{sec:conclusion} concludes the paper. In the remainder of this section, we give a short overview of the model and algorithms for the \WFS\ proposed in \cite{pessoa2018weighted}. 

\subsection{The solution approach of \cite{pessoa2018weighted}}

In \cite{pessoa2018weighted} a MIP model for the problem is introduced. As the model has a very large number of variables and constraints the authors also propose an alternative solution approach, in which a reduced version of their MIP model is solved iteratively. 

Their MIP model has nine different sets of variables and can be viewed as a combination of a model calculating the distances within the same cycle, and a model calculating the distances between different cycles. The first part consists of nine different set of constraints, and the second part consists of seven different sets of constraints, not counting binary constraints on variables. To reduce the size of the model, the authors introduce a nonlinear MIP model which can provide an upper bound on the number of copies for each symbol in the optimal solution. This upper bound can be used to reduce the number of variables in their model. However, their MIP model for the \WFS after applying the variable-reduction is still too large to be effective in computations, even after adding some valid inequalities proposed by the authors. Thus they also develop an iterative solution approach, which is based on an estimated upper bound for the optimal solution value. With this estimated upper bound, the number of variables can be further reduced in combination with their nonlinear MIP model. The iterative algorithm starts with an initial upper bound, and then tries to solve their MIP model with this estimated upper bound set as cutoff for the objective function value. If the problem with the cutoff is feasible, the optimal solution will be the optimal solution for the \WFS problem. If not, the estimated upper bound is increased, and the procedure repeated. This is done iteratively until either the optimal solution is found at some point, or a given iteration limit is reached. We note that in the main computations in \cite{pessoa2018weighted}, the used MIP model contains a set of inequalities for which the authors only conjectured validity, but did not prove it. Thus the reported results of \cite{pessoa2018weighted} can only be considered heuristic in nature until validity of this inequality is proven.

\section{Solution algorithm  \label{sec:algorithm}}

Our solution algorithm is described in Algorithm \ref{alg}. 
\begin{algorithm}[h!tb]   
	\DontPrintSemicolon                 
	\caption{Iterative solution algorithm for the \WFS. \label{alg}}
	\label{alg:sepmaxviolated}
	$z^*\gets \infty$\;
	\For{$\sum_{a_i \in A} f_i \leq \fT\leq T $}
	{
		$z' \gets $ optimal value for \WFS with fixed sequence length $\fT$\; \label{alg:line3}
		\If{$z'<z^*$}
		{
			$z^* \gets z'$\;
		}
	}

\end{algorithm}

The algorithm iteratively solves the \WFS for a fixed sequence length $\fT$, and iterates over all possible $\fT$, i.e., $\sum_{a_i \in A} f_i \leq \fT \leq T$. To do so, the MIP model described in Section \ref{sec:model} is used. The model can be strengthened throughout the algorithm by information from previous iterations, this is discussed, among other possible strengthenings of the model, in Section \ref{sec:strength}. Moreover, we also present some results in Section \ref{sec:strength} which allow us to skip solving the WFS for some $\fT $.

By considering a fixed $\fT$ the modeling of the problem can be simplified, allowing for a model with a much smaller number of variables and constraints compared to the model used in \cite{pessoa2018weighted} (including their reduced variant they use in their iterative algorithm). We note that there also exist other problems where fixing a certain dimension of the problem and then reformulating the problem and solving the reformulated problem in an iterative fashion leads to a more effective solution approach compared to directly solving the problem, see, e.g., \cite{chen2009new,contardo2019scalable,sinnl2020note}.

\subsection{MIP model for a fixed sequence length $\fT$ \label{sec:model}}

Let binary variables $x_{i}^t$, $a_i \in A, t=1,\ldots, \fT$ indicate if symbol $a_i$ gets placed at position $t$ in the sequence. Let binary variables $s_i^{t,t'}$, $a_i \in A$, $1\leq t, t'\leq \fT$ indicate that if symbol $a_i$ gets placed at position $t$, the next occurrence of symbol $a_i$ is at position $t'$, considering a circular sequence. Let binary variables $p_i^{t,t'}$, $a_i \in A$, $1\leq t, t'\leq \fT$ indicate that if symbol $i$ gets placed at position $t$, the last occurrence of symbol $i$ before $t$ is at position $t'$, considering a circular sequence. Note that these variables actually encode a relaxed version of the described indications in our model, i.e., for some combinations of $i,t,t'$, $s$-variables or $p$-variables which do not correspond to the next/last occurrence given the solution encoded by $x$ can be selected. This does not matter, as these variables are used for measuring the objective function value, and our objective function has a min/max-structure, thus only certain $i,t,t'$ need to be selected correctly (i.e., the ones corresponding to symbols $a_i$ and positions $t,t'$ causing the maximum weight-distance-product for the sequence encoded by the $x$-variables). More details on this is given below after the MIP model. Let integer variable $\theta$ measure the objective function value. Let
\begin{equation*}
dist(t,t')=\begin{cases}
t-t' &  if\ t>t' \\
\fT+t-t' & otherwise.
\end{cases}
\end{equation*}

The \WFS\ for a fixed sequence length $\fT$ can be formulated as follows.

\begin{align}
\min \quad &\theta& \\
\sum_{1 \leq t\leq \fT } x^t_i &\geq f_i & \quad \forall a_i \in A \label{eq:chosenmin}\\
\sum_{a_i \in A} x^t_i &=1  &\quad 1 \leq t\leq \fT  \label{eq:position} \\
x^t_i-\sum_{1 \leq t'\leq \fT } p_i^{t,t'}&=0  &\quad \forall a_i \in A,  1 \leq t\leq \fT  \label{eq:pred} \\
x^t_i-\sum_{1 \leq t'\leq \fT } s_i^{t,t'}&=0  &\quad \forall a_i \in A, 1 \leq t\leq \fT  \label{eq:succ} \\
s_i^{t,t'}-p_i^{t',t}&=0 &\quad \forall i \in A_i, 1 \leq t,t'\leq \fT  \label{eq:predsuc} \\
\theta - \sum_{a_i \in A} \sum_{1\leq t'\leq \fT} w(a_i) dist(t,t') p^{t,t'}_i &\geq 0 & 1\leq t \leq \fT \label{eq:objpred} \\
\theta - \sum_{a_i \in A} \sum_{1\leq t'\leq \fT} w(a_i) dist(t',t) s^{t,t'}_i &\geq 0 & 1\leq t \leq \fT \label{eq:objsucc} \\
x^t_i&\in \{0,1\}&\quad \forall a_i \in A, 1 \leq t\leq \fT \notag \\
p^{t,t'}_i&\in \{0,1\}&\quad \forall a_i \in A, 1 \leq t,t'\leq \fT \notag \\
s^{t,t'}_i&\in \{0,1\}&\quad \forall a_i \in A, 1 \leq t,t'\leq \fT \notag 
\end{align}

Constraints \eqref{eq:chosenmin} ensure that each symbol $a_i \in A$ is appearing in the sequence at least $f_i$-times. Constraints \eqref{eq:position} ensure that each position in the sequence only get assigned a single symbol.  Constraints \eqref{eq:pred} and \eqref{eq:succ} ensure that for each placed symbol in the sequence at a certain position, an indicator-variables for the next and previous occurrence of the symbol is selected. Constraints \eqref{eq:predsuc} link the $s$-variables with the $p$-variables. Note that constraints  \eqref{eq:pred}, \eqref{eq:succ}, \eqref{eq:predsuc} only enforce that for each symbol $a_i$ placed at a position $t$, some position $t'$, where symbol $a_i$ is also placed is defined as "predecessor" and "successor". However, on their own, they do not enforce that these selected positions are the truly the previous/next occurrence of the symbol in the solution sequence. In fact, our model does not enforce that this must hold for all the $s$-variables and $p$-variables, but only for the symbol-position-combinations causing the largest weight-distance-product value for the sequence encoded by the $x$-variables. As the objective in the \WFS is concerned with minimizing the maximum weight-distance-product, this is enough to solve the problem to proven optimality. The two set of constraints \eqref{eq:objpred} and \eqref{eq:objsucc}, together with the minimization objective function ensure that this value is measured correctly. For each position $1\leq t \leq \fT$, in the sequence, constraint \eqref{eq:objpred} measure the weight-distance-product induced by the symbol $a_i$ placed at this position $t$, and the position $t'$ encoded as predecessor position by $p^{t,t'}_i$, and ensure that $\theta$ takes at least this value. Now suppose that there is a solution $(x^*,s^*,p^*,\theta^*)$, where $p^{*t,t'}_i$ is not selected in such a way that $t'$ is the predecessor position for the symbol placed at position $t$ for a constraint \eqref{eq:objpred} fulfilled with equality by $(x^*,s^*,p^*,\theta^*)$. Clearly, another feasible solution  $(x^*,s',p',\theta')$ also exists, where $p'^{t,t'}_i$ is selected in such a way that $t'$ is the predecessor position for the symbol placed at position $t$ for this constraint, and $\theta'\leq \theta^*$, as the selection of $p'^{t,t'}_i$ in such a way makes the sum in \eqref{eq:objpred} as small as possible. As the objective function consists of minimizing $\theta$, this ensures that for all constraints \eqref{eq:objpred} fulfilled with equality in a solution the $p$-variables are selected correctly, and thus the objective is measured correctly. Constraints \eqref{eq:objsucc} do the same for the $s$-variables and the successors in the sequence.
 
 
\subsection{Valid inequalities, variable fixings and other enhancements for the solution algorithm\label{sec:strength}}

The following set of inequalities is valid for our MIP model in the sense that there exists at least one optimal solution fulfilling it.

\begin{observation}
Let $M_i$ be an upper bound one the number of times symbol $a_i$ can occur in an optimal solution. Then the inequality
\begin{equation}
\sum_{1\leq t \leq \fT} x^t_i \leq M_i \label{eq:ub}
\end{equation}
is valid for all $a_i \in A$.
\end{observation}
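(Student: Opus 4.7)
The plan is to read off the inequality directly from the definition of $M_i$, reducing the observation to an essentially tautological check against the interpretation of the MIP variables.

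First I would recall what the $x$-variables encode. Constraint \eqref{eq:position} forces exactly one symbol per position, so by integrality of the $x$-variables, the sum $\sum_{1 \leq t \leq \fT} x^t_i$ equals precisely the number of positions in the encoded sequence at which symbol $a_i$ is placed. In other words, for any feasible solution of the MIP, the left-hand side of \eqref{eq:ub} coincides exactly with the number of occurrences of $a_i$ in the underlying WFS sequence of length $\fT$.

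Next I would invoke the hypothesis: by definition $M_i$ is an upper bound on the number of occurrences of $a_i$ in an optimal solution. Hence there exists at least one optimal WFS sequence $S^*$ of length $\fT$ in which $a_i$ occurs at most $M_i$ times, simultaneously for every $a_i \in A$ (this joint statement is exactly what ``bound on an optimal solution'' expresses, since one needs only a single witness optimum). Encoding $S^*$ into the $x$-variables, setting $\theta$ to the optimal value, and then choosing the $s$- and $p$-variables consistently so that \eqref{eq:pred}, \eqref{eq:succ}, \eqref{eq:predsuc} as well as \eqref{eq:objpred}, \eqref{eq:objsucc} are satisfied (which is always possible given any $x$-assignment with the required multiplicities) produces an optimal MIP solution in which \eqref{eq:ub} holds for every $a_i \in A$ at once.

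Finally, I would clarify the weakened notion of validity used here: the inequality is not claimed to be valid for every optimal solution, but only to preserve \emph{some} optimal solution, which is exactly what the construction above exhibits. The main obstacle is therefore not the proof of validity (which, as seen, is essentially a translation of definitions) but rather producing a concrete, computationally useful value of $M_i$. That is a separate combinatorial task, naturally handled via an auxiliary bounding argument using the symbol weights and the current incumbent $z^*$ from Algorithm \ref{alg}, and is where the real work lies for the subsequent sections.
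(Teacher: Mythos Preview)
Your proposal is correct and matches the paper's treatment: the paper does not give a formal proof of this observation at all, treating it as essentially definitional (valid ``in the sense that there exists at least one optimal solution fulfilling it''), and immediately moves on to discussing a concrete choice of $M_i$, namely $\min(\fT/2,\fT-n-1)$, justified by the result from \cite{pessoa2018weighted} that some optimal solution avoids placing the same symbol at consecutive positions. Your write-up simply spells out the tautology more carefully than the paper does, and you correctly anticipate that the substantive content lies in exhibiting a usable $M_i$ rather than in the observation itself.
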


As the upper bound $M_i$ the value $min(\fT/2,\fT-n-1)$ can be used, which is an upper bound for which at least one optimal solution to the \WFS exists, see \cite{pessoa2018weighted}. This upper bound is based on the observation that there exists an optimal solution, in which no symbol $a_i$ will be placed at both position $t$ and $t+1$ (including the $\{\fT,1\}$ combination of positions). 

This observation of \cite{pessoa2018weighted} can also be used to fix some of the $s$-variables and $p$-variables. Correctness of this fixing follows from the fact that a value of one for these variables would mean that a symbol $a_i$ is placed at two consecutive positions in the sequence.

\begin{observation}\label{obs:nearfixing}
All	variables $p^{t,t'}_i, s^{t,t'}_i$ with $|t-t'|=1$ can be fixed to zero. Moreover, variables $p^{1,\fT}_i, s^{1,\fT}_i$ and $p^{\fT,1}_i, s^{\fT,1}_i$ can also be fixed to zero.
\end{observation}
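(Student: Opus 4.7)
The plan is to lift the structural result of \cite{pessoa2018weighted} already invoked for \eqref{eq:ub}, namely that there exists an optimal sequence $S^*$ in which no symbol appears at two circularly consecutive positions. I would then argue that each variable listed in the observation, if set to one, would force precisely such a forbidden consecutive placement of the corresponding symbol, so the MIP must have an optimal solution in which all of these variables equal zero.

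First I would unpack what $s^{t,t'}_i = 1$ implies about the $x$-variables. Constraint \eqref{eq:succ} gives $\sum_{t'} s^{t,t'}_i = x^t_i$, so $s^{t,t'}_i = 1$ forces $x^t_i = 1$. Chaining \eqref{eq:predsuc} with \eqref{eq:pred} gives $s^{t,t'}_i = p^{t',t}_i \leq \sum_{t''} p^{t',t''}_i = x^{t'}_i$, so $s^{t,t'}_i = 1$ also forces $x^{t'}_i = 1$. The symmetric argument yields $p^{t,t'}_i = 1 \Rightarrow x^t_i = x^{t'}_i = 1$. Hence setting any of the variables named in the observation to one forces the symbol $a_i$ to sit at two circularly consecutive positions (either $\{t,t'\}$ with $|t-t'|=1$, or $\{t,t'\}=\{1,\fT\}$).

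To close the argument, I would exhibit an optimal MIP solution consistent with all fixings. Starting from the $S^*$ guaranteed above, set $x^t_i = 1$ iff $a_i$ occupies position $t$ in $S^*$; set $s^{t,t'}_i = 1$ iff $x^t_i = 1$ and $t'$ is the actual circular next occurrence of $a_i$ after $t$; set $p^{t,t'}_i$ analogously for previous occurrences; and let $\theta$ equal the objective value of $S^*$. Feasibility of \eqref{eq:chosenmin}--\eqref{eq:objsucc} is immediate from the definitions, and because no symbol occupies two consecutive positions in $S^*$, none of the variables singled out in the observation receives value one. The only thing to check is that this assignment actually attains $\theta$ equal to the \WFS\ objective value of $S^*$; this holds because the $s$- and $p$-variables match the true predecessor and successor occurrences, so \eqref{eq:objpred}--\eqref{eq:objsucc} measure the maximum weight-distance product exactly. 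Beyond this routine verification I do not anticipate any real obstacle.
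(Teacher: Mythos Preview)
Your proposal is correct and follows exactly the route the paper itself takes: the paper simply remarks that a value of one for any of these variables would force symbol $a_i$ to occupy two circularly consecutive positions, and then appeals to the result of \cite{pessoa2018weighted} that an optimal sequence without such consecutive placements exists. You have merely supplied the details the paper omits---deriving $x^t_i=x^{t'}_i=1$ from the linking constraints and exhibiting the canonical MIP solution built from $S^*$---so there is nothing to add.
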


The next observation describes some variable fixings which can be done in our MIP model given an upper bound $z^*$ for the optimal solution value of an \WFS instance
\begin{observation}\label{obs:objfixing}
	Let $z^*$ be a given upper bound on the optimal solution value of a \WFS instance. Then all variables $p^{t,t'}_i$ with $w(a_i)dist(t,t')\geq z^*$ and all variables $s^{t,t'}_i$ with $w(a_i)dist(t',t) \geq z^*$ can be fixed to zero.
\end{observation}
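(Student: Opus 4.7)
The plan is to read off the fixing directly from the objective-measuring constraints \eqref{eq:objpred} and \eqref{eq:objsucc}. First, I would note that by constraint \eqref{eq:pred}, for every position $t$ we have $\sum_{t'} p_i^{t,t'} = x_i^t$, so across all $i$ and $t'$ at most one variable $p^{t,t'}_i$ equals one for each fixed $t$ (namely, the one corresponding to the single symbol placed at $t$ and its chosen predecessor). This means the right-hand side of \eqref{eq:objpred} evaluated at $t$ collapses to the single nonzero term $w(a_i)\,dist(t,t')$ as soon as some $p^{t,t'}_i=1$.

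Next, I would argue by contradiction. Suppose some feasible solution $(x,s,p,\theta)$ sets $p^{t,t'}_i=1$ for an index with $w(a_i)\,dist(t,t')\geq z^*$. Plugging this into \eqref{eq:objpred} at that $t$ immediately gives $\theta \geq w(a_i)\,dist(t,t')\geq z^*$. Since $z^*$ is already a known upper bound on the optimal value of the \WFS instance, such a solution cannot strictly improve on the incumbent and may therefore be discarded; in particular, every strictly improving feasible solution satisfies $p^{t,t'}_i=0$ for all such $(i,t,t')$. The completely analogous argument, using constraint \eqref{eq:objsucc} instead, establishes the symmetric claim for $s^{t,t'}_i$ with $w(a_i)\,dist(t',t)\geq z^*$.

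The only subtlety I expect to flag is the boundary case $w(a_i)\,dist(t,t')=z^*$: an optimum of value exactly $z^*$ could in principle be realized by a solution that uses such a variable, and this solution would be removed by the proposed fixing. This does not affect correctness of Algorithm \ref{alg}, because the existence of the upper bound $z^*$ means a solution of value $z^*$ is already at hand, so even in the worst case the iterative procedure still certifies the correct optimum. Beyond making this semantic point explicit, the proof carries no real technical obstacle, as the entire argument is a one-line consequence of \eqref{eq:objpred} and \eqref{eq:objsucc} together with the nonnegativity of the remaining summands.
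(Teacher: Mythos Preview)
Your argument is correct and is exactly the intended reasoning: the paper does not give an explicit proof of this observation, treating it as immediate from constraints \eqref{eq:objpred}--\eqref{eq:objsucc} together with the assumption (stated immediately after the observation) that a feasible solution realizing $z^*$ is already in hand. Your handling of the boundary case $w(a_i)\,dist(t,t')=z^*$ matches the paper's implicit justification precisely.
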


Note that we assume that we also have a feasible solution $S(z^*)$ associated with this upper bound, e.g., in the context of an iteration of our algorithm, this is the best solution found in the previous iterations. As initial solution before the first iteration, any heuristic solution can be used. In our approach, we use the trivial solution, which just puts $f_1$ copies of symbol $a_1$ at the beginning of the sequence, followed by $f_2$ copies of symbol $a_2$ and so on.

With a feasible solution available, we can pose constraints such as the one in Observation \ref{obs:objfixing} that in the subsequent iterations only solutions which improve the objective value compared to $z^*$ are feasible.
This avoids the search for an optimal solution for the MIP in an iteration with a $\fT$, where no improving solution can be found. In fact, we also implicitly add a constraint such that only solutions where the solution value $\theta$ improves on $z^*$ are feasible. To do so, we calculate $\theta(z^*)=\max \{\theta': k_i\geq f_i, w(a_i)k_i<z^*,w(a_i)k_i<\theta', \forall a_i \in A; k \in \mathbb N^n \} $, i.e., we look for the maximum positive integer multiple of the weights which is smaller than the current $z^*$ and add $\theta\leq\theta(z^*)$ to our MIP. When these constraints only allowing improved solutions are added to the MIP, in line \ref{alg:line3} of our algorithm, we will either get that the MIP is infeasible, or if it is feasible, we are sure to have found an improved solution.

Given an upper bound $z^*$ for the optimal solution value of an \WFS instance, we can also potentially lift constraints \eqref{eq:chosenmin} by increasing the right-hand-side of them.
\begin{observation}\label{obs:liftingrhs}
 Let $k^*_i=\min\{k_i: w(a_i)\lceil\fT/k_i \rceil <z^*;k_i\geq f_i; k_i \in \mathbb N\}$. As we want to find an improving solution, we can set $f_i=k_i$ on the right-hand-side of constraints \eqref{eq:chosenmin}.
\end{observation}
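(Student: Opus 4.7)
My plan is to prove the observation via a pigeonhole argument on the circular distances. The key underlying fact is that if a symbol $a_i$ occurs exactly $k_i$ times in a circular sequence of length $\fT$, then the $k_i$ consecutive-occurrence distances sum to exactly $\fT$, so by pigeonhole the maximum of these distances is at least $\lceil \fT/k_i \rceil$. Combined with the weighting in the objective, this gives $w(a_i) D_i \geq w(a_i) \lceil \fT/k_i \rceil$.

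First, I would spell out this pigeonhole step: let $a_i$ appear $k_i$ times at positions $p^1_{a_i}<p^2_{a_i}<\cdots<p^{k_i}_{a_i}$. The $k_i$ circular distances $dist(p^\ell_{a_i},p^{\ell+1}_{a_i})$ (with $\ell+1$ taken mod $k_i$ and wrapping around via the last-to-first distance $\fT + p^1_{a_i} - p^{k_i}_{a_i}$) partition the circle and hence sum to $\fT$. Since they are $k_i$ positive integers summing to $\fT$, at least one is $\geq \lceil \fT/k_i \rceil$, which means $D_i \geq \lceil \fT/k_i \rceil$ and therefore the objective value is at least $w(a_i)\lceil \fT/k_i \rceil$.

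Next, I would plug this into the improving-solution requirement. Any solution that strictly improves on the incumbent $z^*$ must have objective value strictly less than $z^*$, which forces $w(a_i)\lceil \fT/k_i \rceil < z^*$ for every $a_i \in A$. Combined with the original feasibility requirement $k_i \geq f_i$, this shows that $k_i$ must lie in the set $\{k : w(a_i)\lceil \fT/k \rceil < z^*,\ k \geq f_i,\ k \in \mathbb{N}\}$, so in particular $k_i \geq k^*_i$ where $k^*_i$ is its minimum. Therefore strengthening the right-hand-side of \eqref{eq:chosenmin} from $f_i$ to $k^*_i$ cannot cut off any solution that improves on $z^*$, which is exactly what the observation asserts.

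The only subtlety is to argue that the set defining $k^*_i$ is nonempty (otherwise the lifting is vacuous, or the MIP for this $\fT$ can be skipped outright). I would briefly note that if no $k \geq f_i$ satisfies $w(a_i)\lceil \fT/k \rceil < z^*$, then by the bound above no feasible solution at length $\fT$ can improve on $z^*$, so that iteration can be pruned; otherwise the minimum $k^*_i$ is well-defined and the lifted constraint is valid. This is the main (and essentially the only) obstacle — the pigeonhole bound itself is immediate once the distances are recognized as a partition of $\fT$.
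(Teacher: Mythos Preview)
Your proof is correct and follows essentially the same approach as the paper: both establish that if $a_i$ occurs $k_i$ times in a circular sequence of length $\fT$ then $D_i \geq \lceil \fT/k_i \rceil$, and use this to force $k_i \geq k^*_i$ in any improving solution. Your pigeonhole argument (the $k_i$ circular gaps sum to $\fT$, hence the largest is at least $\lceil \fT/k_i\rceil$) is a clean and explicit version of what the paper states more loosely as an ``exchange argument'' about even versus uneven placement; you also spell out the connection to the improving-solution constraint and the degenerate empty-set case, which the paper leaves implicit.
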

\begin{proof}
 Suppose in a solution we would place symbol $a_i$ $k_i$-times in a sequence $S$, i.e., $|a_i(S)|=k_i$. Then the best possible distance (i.e., the minimal possible maximal distance between all its consecutive occurrences) for this symbol in any solution sequence is $D'_i=\lceil\fT/k_i \rceil$. This value is achieved by evenly distributing the placement of the symbol in the sequence. Due to the circular distance function, any non-even distribution would lead to a minimal maximal distance as least as large as can be seen by an exchange argument. 
\end{proof}

The next observation is based on Observation \ref{obs:liftingrhs} and can allow us to conclude for a given $\fT$ that there can be no improving solution without the need of solving our MIP.

\begin{observation}\label{obs:skipping}
	Let $K^*=\sum_{a_i \in A} k^*_i$. If $K^*> \fT$ then there exists no improving solution with sequence length $\fT$.
\end{observation}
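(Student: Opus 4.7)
The plan is a direct counting argument via the contrapositive. Assume for contradiction that an improving solution $S$ of sequence length $\fT$ exists, i.e., a feasible sequence whose objective value $\theta_S$ satisfies $\theta_S < z^*$. The aim is to show that each symbol must then appear at least $k^*_i$ times in $S$, after which the total-length accounting produces the contradiction immediately.

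First, I would fix any symbol $a_i$ and let $k_i$ denote the number of times $a_i$ appears in $S$; feasibility demands $k_i \geq f_i$. Reusing the evenly-spaced argument from the proof of Observation \ref{obs:liftingrhs}, the maximum circular distance between consecutive occurrences of $a_i$ in a sequence of length $\fT$ is at least $\lceil \fT/k_i \rceil$, so the contribution of $a_i$ to the objective satisfies $w(a_i) D_i \geq w(a_i) \lceil \fT/k_i \rceil$. If one had $k_i < k^*_i$, then by the minimality built into the definition of $k^*_i$ we would get $w(a_i) \lceil \fT/k_i \rceil \geq z^*$, which gives $\theta_S \geq z^*$, contradicting the assumption that $S$ is improving. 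Hence $k_i \geq k^*_i$ must hold for every $a_i \in A$.

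The concluding step is now immediate: constraint \eqref{eq:position} forces the total number of symbol placements in $S$ to equal $\fT$, so $\sum_{a_i \in A} k_i = \fT$. Combined with $k_i \geq k^*_i$, this yields $K^* = \sum_{a_i \in A} k^*_i \leq \fT$, contradicting the hypothesis $K^* > \fT$. Therefore no improving solution of length $\fT$ can exist.

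The substantive step is really the per-symbol lower bound $k_i \geq k^*_i$, which rides on Observation \ref{obs:liftingrhs}; once that is in place the conclusion is a one-line pigeonhole. The only delicate point is keeping the strict versus non-strict inequalities consistent (the defining condition for $k^*_i$ uses $w(a_i)\lceil\fT/k_i\rceil < z^*$, so $k_i < k^*_i$ gives $w(a_i)\lceil\fT/k_i\rceil \geq z^*$), but this is routine bookkeeping rather than a genuine obstacle.
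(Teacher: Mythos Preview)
Your proof is correct and follows exactly the line the paper intends: the paper does not write out an explicit argument for this observation but simply notes that it is based on Observation~\ref{obs:liftingrhs}, and your proof is the natural unpacking of that claim (each symbol must occur at least $k^*_i$ times in an improving solution, and those occurrences cannot all fit into $\fT$ positions when $K^*>\fT$). There is nothing to add.
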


Observation \ref{obs:liftingrhs} also allows us to lift the right-hand-side of valid inequalities \ref{eq:ub}.

\begin{observation}\label{obs:ub2}
	For a given $a_i \in A$, let $M^*_i=\fT-K^*+k^*_i$. Then an improving solution fulfills
\begin{equation}
\sum_{1\leq t \leq \fT} x^t_i \leq M^*_i \label{eq:ub2}
\end{equation}

\begin{proof}
	Suppose there are more than $M^*_i$ copies of symbol $a_i$ in a sequence. Then there are only less than $\fT-M^*_i$ positions available for the other symbols, but we need $\sum_{a'_i \in A: a'_i \neq a_i} k^*_i\geq \fT-M^*_i$ for the existence of an improving solution.
\end{proof}
	
\end{observation}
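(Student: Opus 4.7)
My plan is to argue by contradiction using a simple counting/pigeonhole on positions, leveraging Observation \ref{obs:liftingrhs} applied to every symbol simultaneously.

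First, I would invoke Observation \ref{obs:liftingrhs} to note that any improving solution (one with objective strictly less than $z^*$) must contain at least $k^*_j$ copies of each symbol $a_j \in A$, since otherwise the best achievable maximum weight-distance product for that single symbol would already be at least $z^*$. In particular, the total number of positions consumed by symbols different from $a_i$ is at least $\sum_{a_j \in A, a_j \neq a_i} k^*_j = K^* - k^*_i$.

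Next, I would suppose for contradiction that an improving solution sequence $S$ contains strictly more than $M^*_i$ copies of $a_i$, i.e., at least $M^*_i + 1$ copies. Since every position in $S$ holds exactly one symbol (by \eqref{eq:position}), summing the counts gives
\begin{equation*}
\fT \;=\; \sum_{a_j \in A} |a_j(S)| \;\geq\; (M^*_i + 1) + (K^* - k^*_i) \;=\; (\fT - K^* + k^*_i) + 1 + (K^* - k^*_i) \;=\; \fT + 1,
\end{equation*}
which is a contradiction. Hence $\sum_{1 \leq t \leq \fT} x^t_i \leq M^*_i$ must hold in every improving solution, proving \eqref{eq:ub2}.

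There is no real obstacle here: the only subtlety is to make explicit that Observation \ref{obs:liftingrhs}, which was phrased symbol-by-symbol, can be applied jointly to all symbols in a single improving solution (every symbol individually contributes at least $k^*_j$ to the objective bound, so all lower bounds hold simultaneously). Once that is made clear, the remainder is an elementary counting argument on the $\fT$ positions of the sequence.
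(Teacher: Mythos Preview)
Your proof is correct and follows essentially the same counting/pigeonhole argument as the paper: both observe that an improving solution needs at least $K^*-k^*_i=\fT-M^*_i$ positions for the symbols other than $a_i$, so more than $M^*_i$ copies of $a_i$ leaves too few positions. Your version just makes the arithmetic more explicit.
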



Due to the structure of the \WFS, there can be many symmetric solutions, i.e., if there are two or more symbols with the same $f_i$ and $w(a_i)$-values, they can be exchanged in a solution without changing the solution value. Moreover, due to the circular distance function, any solution which is obtained by shifting the positions of all the symbols in the solution has the same objective function value. To break these symmetries, we propose to use the following two sets of constraints. For posing these constraints, without loss of generality, we assume that the symbols are ordered in descending order by $w$, and if there are multiple symbols with the same way, these symbols are ordered in descending order by $f$. First, we fix that the symbol $a_1$ must be placed in the first position of the sequence, i.e.,
\begin{equation}
x^1_1=1, \label{eq:sym}
\end{equation}
to deal with the rotational symmetry of our solutions. Moreover, we impose an ordering on the symbols with the same $w(a_i)$ and $f_i$ values to deal with the exchange-symmetry, this is done with the following constraints
\begin{equation}
\sum_{1\leq t \leq \fT'} x^t_\ell \geq x^t_{\ell+1}\quad \forall a_\ell \in A\setminus \{a_n\} : w(a_\ell)=w(a_{\ell+1}), f_\ell=f_{\ell+1}, 1\leq t \leq \fT. \label{eq:sym2}
\end{equation}
They ensure that for two symbols $a_\ell$, $a_{\ell+1}$ with the same $w(a_i)$ and $f_i$ values, the one with smaller index must be placed first in the solution sequence.

Next, we present another set of valid inequalities, which are based on an extended model, i.e., a model with an additional set of variables.
Let binary variables $d_i^j$, $a_i \in A$, $1\leq j \leq M^*_i$ indicate the number of times symbol $a_i$ is appearing in the sequence $S$ encoded by the $x$-variables. The following constraints link the $x$-variables and the $d$-variables.

\begin{equation}
\sum_{1\leq t \leq \fT} x^t_i - \sum_{1\leq j \leq M^*_i} jd^j_i=0 \quad \forall a_i \in A \label{eq:d}
\end{equation}

With these $d$-variables, we can pose the following set of constraints \eqref{eq:d2}, which encode the connection between the number $k_i=|a_i(S)|$ of occurrences of a symbol $a_i$ in a sequence $S$ and the minimal possible weight-distance product induced by $a_i$, using the $D'_i=\lceil\fT/k_i \rceil$ relationship we discussed above in connection with the lifting of inequalities \eqref{eq:chosenmin}.

\begin{observation}
	Let $z^*$ be an upper bound on the objective value of an \WFS instance. Then the following inequalities are valid for our model extended with constraints \eqref{eq:d}.
	\begin{equation}
	P-z^*+\sum_{1 \leq j \leq M_i: w(a_i)\lceil\fT/j \rceil \leq z^*} (z^*-w(a_i)\lceil\fT/j \rceil)d^j_i
 \geq 0 \quad \forall a_i \in A \label{eq:d2}
	\end{equation}
	
\end{observation}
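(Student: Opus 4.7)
The plan is to prove validity by a case analysis on the unique index $j^*\in\{1,\ldots,M_i^*\}$ for which $d_i^{j^*}=1$ in a given feasible solution (this relies on the implicit convention, standard for such indicator families, that exactly one $d_i^j$ is active for each $a_i$; together with \eqref{eq:d} this forces $j^*=|a_i(S)|$, the number of times symbol $a_i$ appears in the sequence $S$ encoded by $x$). Before starting I would also note that the symbol $P$ appearing on the left-hand-side of \eqref{eq:d2} is to be read as the objective variable $\theta$, since otherwise the inequality is not dimensionally consistent with the rest of the model.

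The core estimate is taken directly from the proof of Observation \ref{obs:liftingrhs}: in any feasible sequence, the maximum circular gap $D_i$ of symbol $a_i$ satisfies $D_i\geq \lceil \fT/j^*\rceil$, with equality attained only by the evenly spaced placement. Combined with the objective-linking constraints \eqref{eq:objpred}--\eqref{eq:objsucc}, which force $\theta\geq w(a_i) D_i$, this yields the key lower bound
$$\theta \;\geq\; w(a_i)\lceil \fT/j^* \rceil.$$

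Given this bound, the two cases are routine. If $w(a_i)\lceil \fT/j^*\rceil \leq z^*$, then $j^*$ belongs to the index set of the sum in \eqref{eq:d2}, so the sum collapses to the single term $z^*-w(a_i)\lceil \fT/j^*\rceil$, and the left-hand-side of \eqref{eq:d2} simplifies to $\theta - w(a_i)\lceil \fT/j^*\rceil$, which is nonnegative by the key bound. If instead $w(a_i)\lceil \fT/j^*\rceil > z^*$, then $j^*$ is excluded from the index set, the sum evaluates to zero, and the left-hand-side equals $\theta - z^*$; this is nonnegative because $\theta\geq w(a_i)\lceil \fT/j^*\rceil > z^*$.

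The main obstacle is not computational but interpretive: once the typographical convention $P=\theta$ and the indicator convention on the $d$-variables are pinned down, the inequality is simply the natural strengthening of $\theta\geq w(a_i)\lceil \fT/j^*\rceil$ that piggybacks on a known incumbent $z^*$ to yield a linear cut in $(\theta,d)$-space. The one point that warrants a sentence of justification in the final write-up is why the case $w(a_i)\lceil \fT/j^*\rceil > z^*$ is harmless: conceptually the inequality "sacrifices" its strength in that case, but since such a solution is automatically non-improving (its objective exceeds $z^*$), no useful feasible region is cut off.
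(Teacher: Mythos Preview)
Your proof is correct, and in fact the paper gives no proof of this observation at all---it is simply stated. Your argument supplies exactly the missing justification: the pigeonhole bound $D_i\geq\lceil\fT/j^*\rceil$ from Observation~\ref{obs:liftingrhs}, together with the fact that constraints \eqref{eq:objpred}--\eqref{eq:objsucc} force $\theta\geq w(a_i)D_i$ in every feasible solution (any chosen ``predecessor'' position is at circular distance at least the true gap), gives the key estimate $\theta\geq w(a_i)\lceil\fT/j^*\rceil$, after which your two-case split is immediate.

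Your interpretive caveats are also well taken. The symbol $P$ is indeed a typo for $\theta$, and the intended semantics that exactly one $d_i^j$ equals $1$ (namely the one with $j=|a_i(S)|$) is implicit in the paper's verbal description of the $d$-variables but is not enforced by \eqref{eq:d} alone; a constraint $\sum_j d_i^j=1$ is tacitly assumed. Without that convention the inequality could fail for feasible $(x,d)$-pairs in which the occurrence count is split across several $d_i^j$, so it is worth flagging explicitly.
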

	
\section{Computational results \label{sec:results}}

The solution algorithm was implemented in C++ and IBM ILOG CPLEX 12.10 was used to solve the MIP models. The computations were made on a single core of an Intel Xeon E5-2670v2 machine with 2.5 GHz and 3GB of RAM, and all CPLEX settings were left on their default values. 


\subsection{Instances}

\newcommand{\nI}{\texttt{normal}\xspace}
\newcommand{\lI}{\texttt{large}\xspace}

We used the instances introduced in \cite{pessoa2018weighted}, which are available at \url{https://sites.google.com/site/weightedfairsequencesproblem/instances}. There are two instance sets available at this site.
\begin{itemize}
	\item \nI: This was the main set used in the computational study of \cite{pessoa2018weighted}.
 The instance set consists of ten instances for each combination of $n=5,7,9,11,13,15$ (number of symbols) and $T=2n,3n,4n$, resulting in 180 instances in total. They were created by giving random integer weights from the interval $[1,2n]$ to each symbol, and the frequency $f_i=1$ for all symbols $a_i \in A$. 
 \item \lI: This instance set was created by the authors of \cite{pessoa2018weighted} to push the limits of their proposed approach. This instance set consists of ten instances for various combination of $n\in \{5,7,\ldots,15, 20,\ldots,50\}$ and $T\in \{2n, 3n,\ldots,10n,15n,\ldots, 40n\}$, resulting in 260 instances in total (see Table \ref{ta:large} for the used combinations). The weights and frequencies were created in the same fashion as in instance set \nI.
\end{itemize}

\subsection{Results}

We are first interested in the influence of the various ingredients of our solution framework. To do so, we compared the following three settings with a timelimit of 1800 seconds on instance set \nI.

\newcommand{\basic}{\texttt{basic}\xspace}
\newcommand{\ineqs}{\texttt{ineqs}\xspace}
\newcommand{\enhanced}{\texttt{enhanced}\xspace}

\begin{itemize}
	\item \basic: This is a basic setting consisting of Algorithm \ref{alg} used with our MIP model without any enhancements.
	\item \ineqs: In this setting, the MIP gets enhanced by the valid inequalities \eqref{eq:ub}, the variable fixings described and lifting described in in Observations \ref{obs:nearfixing}, \ref{obs:objfixing}, \ref{obs:liftingrhs} and also the symmetry breaking constraints \eqref{eq:sym}, \eqref{eq:sym2}.
	\item \enhanced: This setting is setting \ineqs enhanced with the extended MIP model with inequalities \eqref{eq:d}\eqref{eq:d2} and the results of Observations \ref{obs:skipping}, \ref{obs:ub2}, which can allow us to skip solving some MIPs and to lift the right-hand-side of inequalities \eqref{eq:ub}.
\end{itemize}

Figure \ref{fig:runtime} shows a plot of the runtime for solving instance set \nI. We see that the additional ingredients improve the performance of our solution approach, and setting \enhanced manages to solve nearly all instances to optimality within the given timelimit.

\begin{figure}[h!tb]
	
	\centering
	\includegraphics[width=0.8\textwidth]{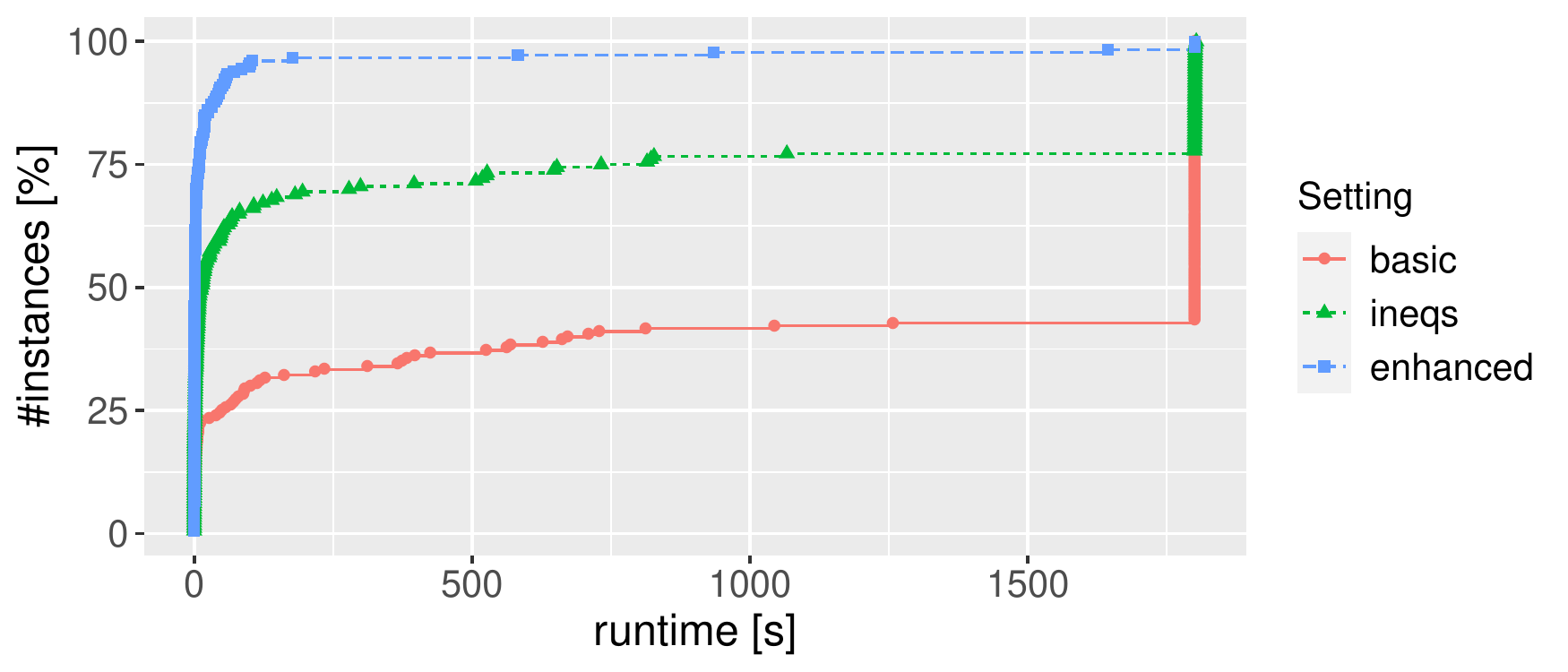}
	\caption{Runtime to optimality for instance set \nI\ and different settings.\label{fig:runtime}}
\end{figure}

Table \ref{ta:normal} gives the average runtime ($t [s]$) and number of solved instances ($\#nS$) aggregated by number of symbols and $T$ and also contains the results of the best approach of \cite{pessoa2018weighted}\footnote{the raw-data of their experiments is available at \url{https://sites.google.com/site/weightedfairsequencesproblem/computational-experiments}}. The runs in \cite{pessoa2018weighted} were made with CPLEX 12.6 on an Intel Core i7 with 1.9 GHz and 6GB of RAM, and as timelimit, they used 3600 seconds for all instances, except for the combinations $n=13, T=52$ and $n=15, T=60$ for which they used 18000 seconds.

\begin{table}[h!tb]
	\centering
	\caption{Results for instance set \nI aggregated by number of symbols and $T$ \label{ta:normal}.} 
	\begingroup\footnotesize
	\begin{tabular}{rr|rr|rr|rr|rr}
		\toprule
		& & \multicolumn{2}{|c}{\basic} & \multicolumn{2}{|c}{\ineqs}  & \multicolumn{2}{|c}{\enhanced} & \multicolumn{2}{|c}{\cite{pessoa2018weighted}}   \\ $n$ & $T$ & t [s] & $\#nS$  & t [s] & $\#nS$ & t [s] & $\#nS$  & t [s] &$\#nS$  \\ \midrule
		5 & 10 & 0.33 & 10 & 0.12 & 10 & 0.04 & 10 & 0.45 & 10 \\ 
		5 & 15 & 1.67 & 10 & 0.38 & 10 & 0.10 & 10 & 1.51 & 10 \\ 
		5 & 20 & 4.25 & 10 & 0.89 & 10 & 0.31 & 10 & 18.58 & 10 \\ 
		7 & 14 & 4.73 & 10 & 0.55 & 10 & 0.11 & 10 & 0.76 & 10 \\ 
		7 & 21 & 121.15 & 10 & 2.45 & 10 & 0.55 & 10 & 15.85 & 10 \\ 
		7 & 28 & 971.58 & 7 & 25.43 & 10 & 5.71 & 10 & 301.93 & 10 \\ 
		9 & 18 & 119.09 & 10 & 2.60 & 10 & 0.24 & 10 & 1.54 & 10 \\ 
		9 & 27 & 1617.55 & 2 & 7.60 & 10 & 0.91 & 10 & 28.33 & 10 \\ 
		9 & 36 & TL & 0 & 181.18 & 10 & 13.13 & 10 & 1475.92 & 8 \\ 
		11 & 22 & 740.50 & 8 & 13.31 & 10 & 0.53 & 10 & 2.46 & 10 \\ 
		11 & 33 & TL & 0 & 293.12 & 10 & 2.89 & 10 & 184.12 & 10 \\ 
		11 & 44 & TL & 0 & 940.85 & 5 & 189.40 & 9 & 2529.78 & 5 \\ 
		13 & 26 & TL & 0 & 48.78 & 10 & 1.33 & 10 & 6.82 & 10 \\ 
		13 & 39 & TL & 0 & 1052.43 & 5 & 10.26 & 10 & 1354.14 & 10 \\ 
		13 & 52 & TL & 0 & TL& 0 & 390.60 & 9 & 13013.13 & 4 \\ 
		15 & 30 & TL & 0 & 506.48 & 9 & 3.05 & 10 & 10.06 & 10 \\ 
		15 & 45 & TL & 0 & TL & 0 & 30.78 & 10 & 1471.48 & 7 \\ 
		15 & 60 & TL & 0 & TL & 0 & 377.60 & 8 & 15903.48 & 2 \\ 
		\bottomrule
	\end{tabular}
	\endgroup
\end{table}

The results show that our best setting \enhanced manages to solve 176 of 180 instances of this set. Moreover, for the combinations of $n$ and $T$ were the setting manages to solve all of the instances, the average runtime is at most 30 seconds. In contrast to this, the best setting of \cite{pessoa2018weighted} only manages to solve 154 instances to "optimality" (recall that the exactness of their algorithm relies on an unproven conjecture), with runtimes up to 1350 seconds for combinations of $n$ and $T$ were their approach manages to solve all the instances to "optimality". 

The results for instance set \nI also show that our approach begins to struggle when $T=4n$ for $n=11,13,15$. Thus, next we are interested in the results for instance set \lI which contains challenging instances, where some have larger $n$ and $T$ is created based on a larger multiplication factor for $n$. Table \ref{ta:large} gives the result for our approach with setting \enhanced and the best approach from \cite{pessoa2018weighted}, the timelimit was set to 3600 seconds for these runs.
\begin{table}[ht]
	\centering
	\caption{Results for instance set \lI aggregated by number of symbols and $T$ \label{ta:large}.} 
	\begingroup\footnotesize
	\begin{tabular}{rr|rr|rr}
		\toprule
		&  & \multicolumn{2}{|c}{\enhanced} & \multicolumn{2}{|c}{\cite{pessoa2018weighted}}   \\ $n$ & $T$ & t [s] & $\#nS$  & t [s] & $\#nS$  \\ \midrule
		5 & 25 & 2.35 & 10 & 384.44 & 10 \\ 
		5 & 30 & 3.05 & 10 & 1305.70 & 8 \\ 
		5 & 35 & 10.98 & 10 & 2281.99 & 5 \\ 
		5 & 40 & 173.92 & 10 & 2658.08 & 3 \\ 
		5 & 50 & 407.71 & 10 & 1823.73 & 5 \\ 
		5 & 75 & 837.11 & 8 & 2178.25 & 4 \\ 
		5 & 100 & 450.99 & 9 & 2648.17 & 3 \\ 
		5 & 125 & 1518.29 & 6 & 2620.63 & 3 \\ 
		5 & 150 & 1618.53 & 6 & 2962.30 & 3 \\ 
		5 & 200 & 2186.13 & 4 & TL & 0 \\ 
		7 & 35 & 79.36 & 10 & 2334.06 & 5 \\ 
		7 & 42 & 91.21 & 10 & 2995.13 & 2 \\ 
		7 & 49 & 566.62 & 9 & 2503.26 & 4 \\ 
		7 & 56 & 798.94 & 8 & 3142.86 & 2 \\ 
		7 & 63 & 1595.93 & 6 & TL & 0 \\ 
		9 & 45 & 802.45 & 8 & TL & 0 \\ 
		11 & 55 & 674.43 & 9 & TL & 0 \\ 
		20 & 40 & 11.57 & 10 & 28.07 & 10 \\ 
		20 & 60 & 116.78 & 10 & 2933.50 & 3 \\ 
		25 & 50 & 49.27 & 10 & 115.55 & 10 \\ 
		25 & 75 & 763.45 & 10 & TL& 0 \\ 
		30 & 60 & 140.12 & 10 & 498.84 & 9 \\ 
		35 & 70 & 383.09 & 10 & 1310.33 & 8 \\ 
		40 & 80 & 763.53 & 10 & 2693.67 & 5 \\ 
		45 & 90 & 1976.25 & 10 & 3445.86 & 3 \\ 
		50 & 100 & 3180.80 & 5 & TL & 0 \\ 
		\bottomrule
	\end{tabular}
	\endgroup
\end{table}

We see that our approach manages to solve 228 of 260 instances to optimality, while the approach of \cite{pessoa2018weighted} only solves 105 instances. Moreover, our approach manages to solve at least one instance for each combination of $n$ and $T$, while there are six such combinations where the approach of \cite{pessoa2018weighted} cannot solve any instance. The problem seems to become harder when $n$ and $T$ gets larger, this is not unexpected, as in this case also the model gets larger and more iterations have to be computed in the algorithm. Detailed results for both instance sets are made available in machine-readable format at \url{https://msinnl.github.io/pages/instancescodes.html}.

\section{Conclusion and outlook \label{sec:conclusion}}

In this paper, we presented a new iterative exact solution algorithm for the weighted fair sequences problem (\WFS), which is a recently introduced NP-hard sequencing problem with applications in diverse areas such as TV advertisement scheduling, periodic machine maintenance and real-time scheduling. In the \WFS we are given an upper bound on the allowed solution sequence length and a list of symbols. For each symbols, there is a positive weight and a number, which gives the minimum times the symbol has to occur in a feasible solution sequence. The goal is to find a feasible sequence, which minimizes the maximum weight-distance product, which is calculated for each consecutive appearance of each symbol in the sequence, including the last and first appearance in the sequence. Our solution algorithm is based on a new mixed-integer programming model for a fixed sequence length. This model is then solved iteratively. We develop valid inequalities, variable fixings and an extended model which allows to pose additional valid inequalities. We also present a result which can allow to skip solving some iterations. In a computational study on instances from literature, we see that our approach vastly outperforms existing solution approaches from literature.

There are various avenues for further work. The development of additional valid inequalities could be interesting to further speed up the solution process. The design of effective heuristics could also be a worthwhile topic, as feasible solutions can be used to initialize our solution algorithm. Studying alternative modeling approaches, such as a column generation/branch-and-price based solution approach were there is a set of variables for each label, which encodes the positions in the sequence this label is placed could also be fruitful. 

\section*{Acknowledgments}

The research was supported by the Linz Institute of Technology (Project 
LIT-2019-7-YOU-211) and the JKU Business School.

\section*{References}

\bibliographystyle{elsarticle-harv}
\bibliography{wfs-bib}

\begin{thebibliography}{16}
\expandafter\ifx\csname natexlab\endcsname\relax\def\natexlab#1{#1}\fi
\expandafter\ifx\csname url\endcsname\relax
  \def\url#1{\texttt{#1}}\fi
\expandafter\ifx\csname urlprefix\endcsname\relax\def\urlprefix{URL }\fi

\bibitem[{Allahverdi et~al.(2008)Allahverdi, Ng, Cheng, and
  Kovalyov}]{allahverdi2008survey}
Allahverdi, A., Ng, C.~T., Cheng, T.~E., Kovalyov, M.~Y., 2008. A survey of
  scheduling problems with setup times or costs. European Journal of
  Operational Research 187~(3), 985--1032.

\bibitem[{Anily et~al.(1998)Anily, Glass, and Hassin}]{anily1998scheduling}
Anily, S., Glass, C.~A., Hassin, R., 1998. The scheduling of maintenance
  service. Discrete Applied Mathematics 82~(1-3), 27--42.

\bibitem[{Bar-Noy et~al.(2002)Bar-Noy, Bhatia, Naor, and
  Schieber}]{bar2002minimizing}
Bar-Noy, A., Bhatia, R., Naor, J., Schieber, B., 2002. Minimizing service and
  operation costs of periodic scheduling. Mathematics of Operations Research
  27~(3), 518--544.

\bibitem[{Bar-Noy and Ladner(2003)}]{bar2003windows}
Bar-Noy, A., Ladner, R.~E., 2003. Windows scheduling problems for broadcast
  systems. SIAM Journal on Computing 32~(4), 1091--1113.

\bibitem[{Brucker(2004)}]{Brucker2004}
Brucker, P., 2004. Scheduling Algorithms. Springer Berlin Heidelberg.
\newline\urlprefix\url{https://doi.org/10.1007/978-3-540-24804-0}

\bibitem[{Chen and Chen(2009)}]{chen2009new}
Chen, D., Chen, R., 2009. New relaxation-based algorithms for the optimal
  solution of the continuous and discrete p-center problems. Computers \&
  Operations Research 36~(5), 1646--1655.

\bibitem[{Contardo et~al.(2019)Contardo, Iori, and
  Kramer}]{contardo2019scalable}
Contardo, C., Iori, M., Kramer, R., 2019. A scalable exact algorithm for the
  vertex p-center problem. Computers \& Operations Research 103, 211--220.

\bibitem[{Garc{\'\i}a-Villoria and Salhi(2015)}]{garcia2015scheduling}
Garc{\'\i}a-Villoria, A., Salhi, S., 2015. Scheduling commercial advertisements
  for television. International Journal of Production Research 53~(4),
  1198--1215.

\bibitem[{Jim{\'e}nez(2013)}]{jimenez2013survey}
Jim{\'e}nez, P., 2013. Survey on assembly sequencing: a combinatorial and
  geometrical perspective. Journal of Intelligent Manufacturing 24~(2),
  235--250.

\bibitem[{Kubiak(2004)}]{kubiak2004fair}
Kubiak, W., 2004. Fair sequences. In: In Handbook of Scheduling: Algorithms,
  Models and Performance Analysis, Leung, JY-T., editor, Chapman \& Hall/CRC,
  Boca. Citeseer.

\bibitem[{Levner et~al.(2010)Levner, Kats, De~Pablo, and
  Cheng}]{levner2010complexity}
Levner, E., Kats, V., De~Pablo, D. A.~L., Cheng, T.~E., 2010. Complexity of
  cyclic scheduling problems: A state-of-the-art survey. Computers \&
  Industrial Engineering 59~(2), 352--361.

\bibitem[{Monden(2011)}]{monden2011toyota}
Monden, Y., 2011. Toyota Production System: An Integrated Approach to
  Just-In-Time. CRC Press.

\bibitem[{Pessoa et~al.(2018)Pessoa, Aloise, and Cabral}]{pessoa2018weighted}
Pessoa, B. J. d.~S., Aloise, D., Cabral, L.~A., 2018. The weighted fair
  sequences problem. Computers \& Operations Research 91, 121--131.

\bibitem[{Sinnl(2020)}]{sinnl2020note}
Sinnl, M., 2020. A note on computational approaches for the antibandwidth
  problem. Central European Journal of Operations Research, 1--21.

\bibitem[{Tijdeman(1980)}]{tijdeman1980chairman}
Tijdeman, R., 1980. The chairman assignment problem. Discrete Mathematics
  32~(3), 323--330.

\bibitem[{Waldspurger and Weihl~W.(1995)}]{waldspurger1995stride}
Waldspurger, C.~A., Weihl~W., E., 1995. Stride scheduling: deterministic
  proportional-share resource management.

\end{thebibliography}


\end{document}